\def\bct{\begin{center}}
\def\ect{\end{center}}
\def\beg{\begin}
\def\bit{\begin{itemize}}
\def\eit{\end{itemize}}
\def\<{\langle}
\def\>{\rangle}
\def\mbb{\mathbb}
\def\tn{\textnormal}
\newtheorem{thm}{Theorem}[section]
\newtheorem{lem}[thm]{Lemma}
\newtheorem{cor}[thm]{Corollary}
\title{Conformally K\"ahler Surfaces and Orthogonal Holomorphic Bisectional Curvature}
\author{Mustafa Kalafat \and Caner Koca}
\begin{document}
\maketitle
\begin{abstract} We show that a compact complex surface which admits a conformally K\"ahler metric $g$ of positive orthogonal holomorphic bisectional curvature is biholomorphic to the complex projective plane. In addition, if $g$ is a Hermitian metric which is Einstein, then the biholomorphism can be chosen to be an isometry via which $g$ becomes a multiple of the Fubini-Study metric.
  \end{abstract}

\section{Introduction}

Let $(M,J)$ be a complex manifold. A Riemannian metric $g$ on $M$ is called a {\em Hermitian metric}
if the complex structure $J:TM\rightarrow TM$ is an orthogonal transformation at every point on $M$ with respect to the metric $g$, that is,
 $g(X,Y)=g(JX,JY)$ for tangent vectors $X,Y\in T_p M$ for all $p\in M$. In this case, the triple $(M,J,g)$ is called a {\em Hermitian manifold}.
The compatibility between the complex structure and the metrics allows one to define further notions of curvature:

\begin{enumerate}
\item  The {\em holomorphic sectional curvature} in the direction of a unit tangent vector $X$ is defined by
$$\tn{H}(X):=\tn{Rm}(X,JX,X,JX) = g(R(X,JX)JX,X)$$
Thus, $\tn{H}(X)$ is the {\em sectional curvature} of the complex line (i.e. the $J$-invariant real plane) spanned by the real basis $\{U,JU\}$.  One can also think of this as the restriction of the sectional curvature function on the Grassmannian of real 2-planes $G_2(T_p M)$ to the Grassmannian of complex lines $G_1^{\mbb C} (T_p M)$, i.e. the $\mbb{CP}_{n-1}$-many complex lines at a point:
$$\tn{H}: G_1^{\mbb C}(T_p M) \longrightarrow \mbb R.$$
As an example, the complex projective spaces $\mbb{CP}_n$ with Fubini-Study metric are of constant positive holomorphic curvature $4$. In general, the sectional curvatures lie in the interval $[1,4]$ and a plane has sectional curvature 4 precisely when it is a complex line.

\item The \emph{holomorphic bisectional curvature} associated to a given pair of unit vectors $X,Y\in T_p M$ is defined as
$${\tn{H}(X,Y)}:=\tn{Rm}(X,JX,Y,JY) = g(R(X,JX)JY,Y).$$
Note that the holomorphic sectional curvature is determined by the holomorphic bisectional curvature as
\begin{equation}\label{holsecvsbisec}
\tn{H}(X)=\tn{H}(X,X).\end{equation}
\end{enumerate}

Many results are known about holomorphic sectional and bisectional curvatures in the literature when the Hermitian metric $g$ is \emph{K\"ahler}, i.e. when $\nabla J=0$. K\"ahler metrics have holonomy in $U(n)$, which allows us to write the bisectional curvature as a sum of two \emph{sectional} curvatures as follows:
\begin{equation}\label{bisecexpansion}
{\tn{H}(X,Y)} = {\tn{K}(X,Y)}+{\tn{K}(X,JY)}
\end{equation}
where $\tn{K}(\cdot,\cdot)$ stands for the sectional curvatures (hence the name ``bisectional").

The Uniformization Theorem \cite{unifhawley53,unifigusa54} asserts that a complete K\" ahler manifold of constant holomorphic sectional curvature $\lambda$ is necessarily a quotient of one of the following three models,
\begin{enumerate}
\item $\lambda>0$ ~$\implies$~ $\mbb{CP}_n$~~ endowed with Fubini-Study metric.
\item $\lambda=0$ ~$\implies$~ $\mbb C ^n$~~ endowed with flat metric.
\item $\lambda<0$ ~$\implies$~ $\mbb D^n \subset \mbb C^n$~~ endowed with hyperbolic metric.
\end{enumerate}
In particular such a metric has to be Einstein, i.e. of constant Ricci curvature. Moreover, a complete K\" ahler manifold of positive constant holomorphic sectional curvature is simply connected by \cite{Synge:1936,kobayashi}. This means that there is no quotient in the first case.

Once the constant curvature case has been settled, one immediately wonders  what can be said about the non-constant case. In this direction, we know that the positivity of bisectional curvature uniquely determines the biholomorphism type:
\begin{thm}[Frankel Conjecture, Siu-Yau Theorem \cite{SY:1980}] \label{bisecfrankeln}
Every compact K\" ahler manifold of positive bisectional holomorphic curvature is biholomorphic to the complex projective space.
\end{thm}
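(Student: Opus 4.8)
The plan is to follow the harmonic-map strategy of Siu and Yau. First I would extract the consequences of the hypothesis: since $\tn{H}(X,Y)>0$ for every pair of unit vectors, letting $Y$ run over a unitary frame and summing the holomorphic bisectional curvatures shows that the Ricci curvature is strictly positive. Hence $c_1(M)>0$, so $M$ is a Fano manifold and is projective by the Kodaira embedding theorem. By Myers' theorem the fundamental group is finite, and a Bochner/Synge-type vanishing argument (cf.\ \cite{kobayashi}) gives $b_1=0$; combined with the Fano condition this forces $M$ to be simply connected. The problem thus reduces to showing that a simply connected Fano $n$-fold whose holomorphic bisectional curvature is positive is biholomorphic to $\mbb{CP}_n$, and the natural invariant to control is the Fano index.

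The heart of the argument is the production of a rational curve of controlled degree via stable harmonic two-spheres. Since $M$ is simply connected with $\pi_2(M)\neq 0$, the Sacks--Uhlenbeck theory supplies a nonconstant energy-minimizing harmonic map $f:S^2\to M$ in a generating homotopy class. I would then invoke the second variation of the energy,
$$I(V,V)=\int_{S^2}\big(|\n V|^2-\tn{Rm}(V,df,df,V)\big),$$
and feed in variation fields $V$ adapted to the complex structure $J$, built from $Jdf$ and the $(1,0)$/$(0,1)$ splitting of $df$. The decisive point is that, for these test fields, the curvature integrand collapses to a holomorphic bisectional curvature, which by hypothesis has a definite sign; summing the resulting inequalities over a suitable frame contradicts stability unless $f$ is either holomorphic or anti-holomorphic. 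Either way, the image is a rational curve $C\subset M$.

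With a rational curve in hand I would bound its anticanonical degree from below using the curvature, and then apply the deformation theory of rational curves together with Mori's bend-and-break to produce a \emph{minimal} rational curve through a general point of anticanonical degree at least $n+1$. By the Kobayashi--Ochiai characterization of projective space --- a Fano $n$-fold of index $n+1$ is $\mbb{CP}_n$ --- this identifies $M$ biholomorphically with $\mbb{CP}_n$, completing the proof.

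The main obstacle I anticipate is twofold. Analytically, the delicate step is the second variation computation: one must choose the test fields so that \emph{only} the bisectional curvature survives, rather than an indefinite sum of sectional curvatures, and this is precisely where the K\"ahler identity \eqref{bisecexpansion} and the holonomy in $U(n)$ are indispensable. Geometrically, the passage from ``there exists a rational curve'' to ``the index equals $n+1$, hence $M=\mbb{CP}_n$'' requires genuine algebraic geometry (deformation of rational curves, bend-and-break, Kobayashi--Ochiai), and obtaining the uniform degree estimate is the crux.
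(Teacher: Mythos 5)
The paper itself does not prove this theorem --- it is quoted as background with the citation \cite{SY:1980} --- so your proposal can only be measured against the original Siu--Yau argument. Your analytic half is essentially their route, correctly reconstructed in outline: summing bisectional curvatures over a unitary frame gives $\mathrm{Ric}>0$, hence $M$ is Fano, projective, and simply connected (cf.\ \cite{kobayashi}); then $\pi_2(M)\cong H_2(M,\mathbb Z)\neq 0$ by Hurewicz, Sacks--Uhlenbeck supplies a stable harmonic sphere in a generating class, and a second-variation argument forces the minimizer to be holomorphic or anti-holomorphic. One caution on the step you flag as delicate: test fields ``built from $J\,df$'' do not suffice. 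Siu and Yau must complexify the index form and manufacture variation fields from holomorphic sections of the pullback bundle $f^*T^{1,0}M$, using its $\bar{\partial}$-operator and Grothendieck splitting $\bigoplus_i \mathcal O(a_i)$; that construction, not a frame-summing trick, is what isolates the bisectional curvature with a definite sign.

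The concluding step as you state it has a genuine gap, in two places. First, the direction of the degree bound is backwards: bend-and-break produces rational curves of anticanonical degree \emph{at most} $n+1$, not at least. The lower bound $c_1(M)\cdot C\ge n+1$ comes instead from the curvature hypothesis itself: positivity of bisectional curvature makes every holomorphic quotient line bundle of $f^*T^{1,0}M$ have positive degree (its curvature integrand is a bisectional curvature plus a nonnegative second-fundamental-form term), so all $a_i\ge 1$ in the splitting, while $df$ gives $\mathcal O(2)=T\mathbb{CP}_1\hookrightarrow f^*T^{1,0}M$, so $a_1\ge 2$ and $\sum a_i\ge n+1$; no Mori machinery is needed, and Siu--Yau use none. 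Second, and more seriously, Kobayashi--Ochiai requires the \emph{index}, i.e.\ $c_1(M)=(n+1)H$ for an ample generator $H$. A single rational curve with $c_1\cdot C\ge n+1$ yields this only when $\mathrm{Pic}(M)\cong\mathbb Z$ and $H\cdot C=1$. In Siu--Yau the first input is $b_2(M)=1$, a Bochner-type theorem of Bishop--Goldberg for positive bisectional curvature that your sketch never establishes, and the second comes from the fact that the minimizing sphere represents a generator of $\pi_2(M)\cong H_2(M,\mathbb Z)\cong\mathbb Z$. Without both, ``a minimal rational curve of anticanonical degree $\ge n+1$ through a general point'' does not determine the index; the modern Cho--Miyaoka--Shepherd-Barron characterization would close that gap, but it is a far deeper theorem proved two decades later and is not what Siu--Yau invoke.
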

This theorem does not, however, specify the metric in question. Nevertheless, if we in addition assume that the metric is \emph{Einstein}, then the metric is unique, too:
\begin{thm}[\cite{berger65,goldbergkobayashi67}]  \label{bgk}
An $n$-dimensional compact connected K\"ahler manifold with an Einstein (or constant scalar curvature) metric of positive holomorphic bisectional curvature is globally isometric to $\mbb{CP}_n$ with the Fubini-Study metric up to rescaling.
\end{thm}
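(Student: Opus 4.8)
The plan is to reduce the problem to the case of \emph{constant} holomorphic sectional curvature, where the Uniformization Theorem applies directly. Observe first that positivity of the holomorphic bisectional curvature already forces positivity of the Ricci curvature, since the Ricci curvature in a direction $X$ is the sum of the bisectional curvatures $\tn{H}(X,e_i)$ over a unitary frame $\{e_i\}$; by Myers' theorem $M$ then has finite fundamental group, and the entire analysis takes place on a manifold whose curvature has a definite sign. The goal is to show that the Einstein (or constant scalar curvature) hypothesis upgrades the mere positivity of $\tn{H}(X,Y)$ to the rigidity $\tn{H}(X)\equiv \tn{const}$.

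I would first dispose of the constant-scalar-curvature case by reducing it to the Einstein case. On a compact Kähler manifold the Ricci form $\rho$ is always closed, and its codifferential is proportional to $J\,ds$, so constant scalar curvature $s$ makes $\rho$ a harmonic $2$-form. Applying the Bochner--Weitzenböck formula to $\rho$ and integrating, the zeroth-order curvature term is a contraction of the curvature tensor against $\rho$ that rewrites as a positive combination of holomorphic bisectional curvatures; positivity then forces $\nabla\rho=0$, i.e. the Ricci tensor is parallel. Since a non-trivial de Rham / local-product splitting would produce vanishing bisectional curvatures between the factors, positivity of $\tn{H}(X,Y)$ makes $M$ irreducible, so the parallel Ricci tensor is a constant multiple of $g$ and the metric is Einstein.

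With the metric now Einstein, the contracted second Bianchi identity together with $\nabla\tn{Ric}=0$ shows that the full curvature tensor is divergence-free, i.e. harmonic. I would then either (a) run Berger's maximum-principle argument, maximizing the holomorphic sectional curvature $\tn{H}$ over the unit tangent bundle and using the first- and second-order extremality conditions at a maximizer, combined with the Einstein equation, the Bianchi identity, and the positivity of $\tn{H}(X,Y)$, to show that the maximum and minimum values of $\tn{H}$ must coincide; or (b) apply the Lichnerowicz-type Weitzenböck formula for the harmonic curvature tensor, whose zeroth-order term is again controlled by the bisectional curvature and forces $\nabla R=0$, exhibiting $M$ as a locally symmetric space whose only positively-bisectionally-curved Kähler model is $\mbb{CP}_n$. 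Either route yields that $g$ has constant, and necessarily positive, holomorphic sectional curvature.

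Finally, a complete Kähler metric of positive constant holomorphic sectional curvature is simply connected by \cite{Synge:1936,kobayashi}, so the $\lambda>0$ case of the Uniformization Theorem identifies $(M,J,g)$ with $\mbb{CP}_n$ carrying a rescaled Fubini--Study metric and with no quotient; in particular the biholomorphism may be taken to be the isometry. The main obstacle is the rigidity step of the previous paragraph: one must verify that, under the hypothesis of positive bisectional curvature alone, the zeroth-order term in the relevant Weitzenböck / maximum-principle identity has the correct definite sign, and --- crucially --- treat the equality case so as to pass from an inequality to the parallelism $\nabla R=0$ (equivalently, to the coincidence of the extreme values of $\tn{H}$). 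This sign-definiteness of a quadratic contraction of the curvature operator against the bisectional curvature is exactly where the Einstein condition and the positivity hypothesis must be played off against one another.
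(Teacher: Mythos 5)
The paper does not actually prove Theorem \ref{bgk}: it is quoted as a known result with citations to Berger and Goldberg--Kobayashi, so the only meaningful comparison is with those sources --- and your outline is, in essence, a reconstruction of their proofs. Your reduction of the constant-scalar-curvature case to the Einstein case is exactly the Goldberg--Kobayashi mechanism: on a K\"ahler manifold $\rho$ is closed and $\delta\rho$ is a multiple of $ds\circ J$, so constant $s$ makes $\rho$ harmonic; for a harmonic real $(1,1)$-form the Weitzenb\"ock zeroth-order term, computed in a unitary frame diagonalizing the form with eigenvalues $\lambda_i$, is a sum of terms $\tn{H}(e_i,e_j)(\lambda_i-\lambda_j)^2$, and positivity of the bisectional curvature forces all $\lambda_i$ equal, whence $\rho$ is pointwise proportional to $\omega$ and closedness makes the factor constant. (This yields $\rho=c\,\omega$, i.e.\ Einstein, directly --- slightly cleaner than your detour through $\nabla\rho=0$ plus de Rham irreducibility, though that variant also works, and your irreducibility observation, that a local product would have vanishing bisectional curvatures between factors, is correct.) Your alternatives (a) and (b) for the Einstein case are precisely Berger's maximum-principle argument and the Lichnerowicz/Goldberg--Kobayashi harmonic-curvature argument, respectively.

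Two caveats. First, as a \emph{proof} your text defers exactly the step that constitutes the theorem: the sign-definiteness and, more delicately, the equality analysis in the extremality/Weitzenb\"ock identity that upgrades positivity of $\tn{H}(X,Y)$ to $\nabla R=0$ or to constancy of $\tn{H}$. You flag this honestly, but nothing in the proposal verifies it, and it is genuinely intricate (Berger's computation at a maximizer of $\tn{H}$ uses the K\"ahler symmetries of the curvature tensor and the Einstein condition in an essential way); so what you have is a correct plan rather than a complete argument. Second, a small logical slip in the final paragraph: route (b) does not produce constant holomorphic sectional curvature and then invoke uniformization. Rather, $\nabla R=0$ exhibits $M$ as a compact, irreducible (by your positivity argument), simply connected (finite $\pi_1$ by Myers, then Kobayashi \cite{kobayashi}) Hermitian symmetric space, and one concludes by the classification: $\mbb{CP}_n$ with the Fubini--Study metric is the only such space all of whose orthogonal bisectional curvatures are strictly positive. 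Constancy of $\tn{H}$ is then read off \emph{after} the identification, not before it; only route (a) genuinely feeds into the Uniformization Theorem as you describe. With that correction, your outline coincides with the cited proofs.
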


In this paper we will show in 4-dimensions that the hypothesis of these theorems can be relaxed. Our main theorem is as follows:

\vspace{.05in}

\noindent {\bf Theorem \ref{finalthm}.}
{\em If a compact complex surface $(M,J)$ admits a \emph{conformally K\"ahler} metric of {\em positive orthogonal holomorphic bisectional curvature}, then it is biholomorphically equivalent to the complex projective plane $\mbb{CP}_2$.}

\vspace{.05in}

Here, a Hermitian metric $g$ is said to have \emph{positive orthogonal holomorphic bisectional curvature} if the bisectional holomorphic curvature of any two complex lines in every tangent space are positive, i.e. $$H(X,Y)>0$$ whenever $X\perp Y $ and $JY$. Thus, for example, if the metric $g$ is \emph{K\"ahler}, equations \eqref{holsecvsbisec} and \eqref{bisecexpansion} imply that positivity of sectional curvature implies that of holomorphic bisectional curvature, which in turn implies positivity of holomorphic sectional curvature. The positivity of \emph{orthogonal} holomorphic bisectional curvature is in some sense a complementary condition to the latter. None of the converse directions is in general true. On the other hand, if the metric is not K\"ahler, but merely Hermitian, then the holonomy of the metric is not in $U(2)$, and hence the equation \eqref{bisecexpansion} does not hold; therefore, positivity of the sectional curvature does not imply the positivity of the bisectional curvature.

A metric $g$ is called \emph{conformally K\"ahler} if the conformally rescaled metric $ug$ is K\"ahler, where $u$ is a positive smooth function. On a compact manifold, if $g$ is K\"ahler to begin with, $ug$ will most definitely not be K\"ahler (with respect to the same complex structure
\footnote{Here one should insist on being K\" ahler with respect to the {\em same} complex 
structure. As an example,
the Page metric on  $\mbb{CP}_2\sharp\, \overline{\mbb{CP}}_2$ has an orientation-reversing 
conformal map, consequently has two different
conformal rescalings which are K\"ahler but for different complex structures. 
Interested reader may consult to \cite{apostolovetalambi} and related papers for more regarding this 
situation.} 
) unless $u$ is identically constant. Our theorem has the following nice application.

\vspace{.05in}

\noindent {\bf Corollary \ref{mainthmcor}.}
{\em If $(M,J,g)$ is a Hermitian manifold and $g$ is an Einstein metric of positive orthogonal bisectional curvature. Then there is a biholomorphism between $M$ and $\mbb{CP}_2$ via which $g$ becomes the Fubini-Study metric up to rescaling.}

\vspace{.05in}

We remark in passing that the 4-dimensional version of the Frankel Conjecture had already been proven by Andreotti \cite{andreotti} in 1957. Also, in \cite{kocathesis,kocapage}, the second author obtained a result analogous to Corollary \ref{mainthmcor} with the positivity assumption on the \emph{sectional} curvature.

\noindent{\bf Acknowledgements.} The authors would like to thank Claude LeBrun for
suggesting us this problem, and for his help and encouragement. The authors also thank F. Belgun, 
Peng Wu for useful inquiries, and the referee for useful remarks which greatly improved the original manuscript. This work is partially supported by the grant 
 $\sharp$113F159 of T\"ubitak\footnote{Turkish science and research council.}.

\newpage 

\section{Bisectional curvature and Weitzenb\" ock formula}\label{secestimates}

In order to better understand the bisectional curvature of Hermitian metrics which are not K\"ahler, it is crucial to describe a way of computing this curvature using $2$-forms. Let $X,Y$ be two unit tangent vectors on a Hermitian $4$-manifold $(M,J,g)$ . Then
\begin{align*}
\tn{H}(X,Y) 
            & = \langle \mathcal R(X\wedge JX), Y\wedge JY\rangle
\end{align*}
where $\mathcal R:\Lambda^2 TM \rightarrow \Lambda^2 TM$ is the curvature operator acting on 2-vectors. Note that $X\wedge JX$ and $Y\wedge JY$ correspond to two complex lines generated by $\{X,JX\}$ and $\{Y,JY\}$. 
We next describe a useful interpretation of the planes in the tangent space via 2-forms. 
Sectional curvatures at a point $p\in M$ can be thought as a function on the Grassmannian of oriented two planes in the corresponding tangent space:
$$\tn{K} : G_2^+(T_p M) \longrightarrow \mbb R.$$

In dimension $4$ it is well-known that one can describe this Grassmannian in terms of 2-forms as \cite{jeffdg}
 $$G_2^+(T_p M)\approx
 \{(\alpha,\beta)\in (\Lambda^2_+\oplus\Lambda^2_-)_p : |\alpha|=|\beta|=1/\sqrt{2}\}\approx S^2\times S^2$$
where $\Lambda^2_{\pm}$ stands for the self-dual and anti-self-dual $2$-forms, i.e. $\Lambda^2_{\pm} = \{\phi\in\Lambda^2 : *\phi = \pm \phi \}$, where $*$ is the Hodge-$*$ operator determined by the metric $g$. 
More explicitly, given an oriented plane in $T_p M$, one can choose a unique orthogonal basis $\{e_1,e_2\}$, such that the wedge of their duals (with respect to $g$) $e^1\wedge e^2\in\Lambda^2$ is decomposable uniquely in such a way that 
~$e^1\wedge e^2= \alpha+\beta$ where $\alpha\in\Lambda^2_+,~ \beta\in \Lambda^2_-,~|\alpha|=|\beta|=1/\sqrt{2}$.
Conversely, starting with a decomposable 2-form $\omega\in\Lambda^2$ i.e. $\omega=\theta\wedge\delta$ for some $\theta,\delta\in \Lambda^1$,
the duals  $\{\theta^\sharp,\delta^\sharp\}$ give an oriented basis for a plane.

In this correspondence, a complex line in $T_p M$ corresponds to a $2$-form of the form
${\omega\over 2} + \varphi$  for an anti-self-dual 2-form $\varphi\in(\Lambda^2_-)_p$ with $|\varphi|=1/\sqrt{2}$, where $\omega\in\Lambda^2_+$ is the associated $(1,1)$-form of the Hermitian metric $g$, i.e. $\omega (\cdot,\cdot)=g(J\cdot,\cdot)$. Notice that $|\omega| = \sqrt 2$.
With this correspondence in mind, we see that, given two unit tangent vectors $X,Y\in T_p M$ we can express the holomorphic bisectional curvature as
$$\tn{H}(X,Y) = \langle \mathcal R \left({\omega\over 2} + \varphi \right), {\omega\over 2} + \psi\rangle$$
where the anti-self-dual forms $\varphi,\psi\in(\Lambda^2_-)_p$ of length $1/\sqrt 2$ correspond to the complex lines spanned by the real bases $\{X,JX\}$ and $\{Y,JY\}$, respectively. Here, $\mathcal R:\Lambda^2 \rightarrow \Lambda^2$ is the curvature operator on $2$-forms.

Let us recall the decomposition of the curvature operator $\mathcal R :\Lambda^2\rightarrow \Lambda^2$. If $(M,g)$ is any oriented 4-manifold, then the decomposition $\Lambda^2 =\Lambda^2_+ \oplus \Lambda^2_-$ implies that the curvature operator $\mathcal R$ can be decomposed as
\begin{equation}
\label{curv}
{\mathcal R}=
\left(
\mbox{
\begin{tabular}{c|c}
&\\
$W_++\frac{s}{12}$&$\mathring{r}$\\ &\\
\cline{1-2}&\\
$\mathring{r}$ & $W_-+\frac{s}{12}$\\&\\
\end{tabular}
} \right)
\end{equation}
where $W_{\pm}$ is the self-dual/anti-self-dual Weyl curvature tensor, $s$ is the scalar curvature and $\mathring r$ stands for the tracefree part of the Ricci curvature tensor $r$ acting on $2$-forms. Note that $\mathcal R$ is a self-adjoint operator.

\begin{lem}\label{lemma.conf}
Let $(M,J,g)$ be a Hermitian manifold. If $g$ has positive orthogonal holomorphic bisectional curvature, then so does any other conformally equivalent metric $ug$.
\end{lem}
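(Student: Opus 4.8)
The plan is to reduce the orthogonal holomorphic bisectional curvature to a quantity built solely from the Weyl tensor, and then to invoke the conformal invariance of the Weyl tensor.

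First I would translate the orthogonality hypothesis into the $2$-form picture set up above. The complex lines spanned by $\{X,JX\}$ and $\{Y,JY\}$ correspond to $\frac\omega2+\varphi$ and $\frac\omega2+\psi$ with $\varphi,\psi\in(\Lambda^2_-)_p$ of length $1/\sqrt2$. The hypotheses $X\perp Y$ and $X\perp JY$, together with the $J$-invariance of the metric, say precisely that these two complex lines are orthogonal, hence (in real dimension four) span $T_pM$. Working in an adapted orthonormal frame one checks that orthogonal complex lines force $\psi=-\varphi$, so that $\tn H(X,Y)=\langle\mc R(\frac\omega2+\varphi),\frac\omega2-\varphi\rangle$.

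The core computation is then purely algebraic. Since $\mc R$ is self-adjoint, the two cross terms in this pairing cancel and one is left with $\tn H(X,Y)=\frac14\langle\mc R\omega,\omega\rangle-\langle\mc R\varphi,\varphi\rangle$. Feeding in the block decomposition \eqref{curv} and using that the off-diagonal Ricci block $\mathring r$ interchanges $\Lambda^2_+$ and $\Lambda^2_-$, the terms $\mathring r\omega$ and $\mathring r\varphi$ are orthogonal to $\omega$ and $\varphi$ and drop out. Because $|\omega|^2=2$ and $|\varphi|^2=1/2$, the two scalar-curvature contributions are each equal to $s/24$ and cancel, leaving the clean expression $$\tn H(X,Y)=\tfrac14\langle W_+\omega,\omega\rangle-\langle W_-\varphi,\varphi\rangle.$$ Thus on orthogonal complex lines the bisectional curvature sees only the Weyl tensor.

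Finally I would pass to the rescaled metric $ug$. The spaces $\Lambda^2_\pm$ are unchanged, since the Hodge star on $2$-forms in dimension four is conformally invariant, while the Kähler form becomes $u\omega$ and the companion form $\varphi$ becomes $u\varphi$ (after renormalizing to $ug$). Tracking the conformal weights in the formula above---the fully covariant Weyl tensor scales by $u$, each of $\omega,\varphi$ by $u$, and the induced inner product on $2$-forms by $u^{-2}$---produces an overall factor $u^{-1}$ in each term, so that $\tn H^{ug}(X,Y)=u^{-1}\,\tn H^{g}(X,Y)$ on every orthogonal pair of complex lines. As $u>0$, the sign is preserved and the conclusion follows.

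The main obstacle is the middle step. A priori the curvature operator $\mc R$ transforms in a complicated, non-conformally-invariant way under $g\mapsto ug$, so no direct sign control is available. It is exactly the orthogonality condition $\psi=-\varphi$ that engineers the cancellation of the scalar-curvature and Ricci contributions and isolates the conformally controlled Weyl part; once that reduction is in hand, the remaining bookkeeping of conformal weights is routine.
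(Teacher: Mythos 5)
Your proposal is correct and takes essentially the same route as the paper: both identify an orthogonal pair of complex lines with the forms $\frac{\omega}{2}+\varphi$ and $\frac{\omega}{2}-\varphi$, use self-adjointness of $\mathcal R$ and the block decomposition \eqref{curv} to reduce the curvature to $\langle W_+(\frac{\omega}{2}),\frac{\omega}{2}\rangle-\langle W_-(\varphi),\varphi\rangle$, and conclude from the overall conformal weight $u^{-1}$ of this Weyl expression. Your additional details (the adapted-frame verification that $\psi=-\varphi$ and the explicit bookkeeping of conformal weights) simply make explicit steps the paper leaves implicit.
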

\begin{proof}
Let $\omega$ be the associated $(1,1)$-form of $g$. For any $\varphi\in\Lambda_-$ with $|\varphi|_g=1/\sqrt 2$, the 2-forms $\frac{\omega}{2}+\varphi$ and $\frac{\omega}{2}-\varphi$ are orthogonal with respect two the metric; and hence, they represent \emph{orthogonal} complex lines in the tangent space. Therefore, a Hermitian metric $g$ has positive orthogonal holomorphic bisectional curvature if and only if
 $$\langle \mathcal R \left({\omega\over 2} + \varphi \right), {\omega\over 2} - \varphi\rangle>0$$
for any $\varphi\in \Lambda_-$ with (pointwise) norm $|\varphi|_g=1/\sqrt{2}$. On the other hand, using the decomposition of the curvature tensor, and the fact that $\mathcal R$ is a \emph{self-adjoint} operator we see that
\begin{equation}\label{pohbc.cond}
\langle \mathcal R \left({\omega\over 2} + \varphi \right), {\omega\over 2} - \varphi\rangle = \langle  W_+ \left({\omega\over 2}\right), {\omega\over 2} \rangle - \langle  W_- \left({\varphi}\right), {\varphi} \rangle >0
\end{equation}
However, this condition is indeed \emph{conformally invariant}. More precisely, for a conformally related metric $\tilde g = ug$ we have
$$\langle\tilde W_+ \left(\frac{\tilde\omega}{2}\right), \frac{\tilde\omega}{2}\rangle_{\tilde g} - \langle\tilde W_-(\tilde\varphi),\tilde\varphi\rangle_{\tilde g} = \frac{1}{u}\left[ \langle  W_+ \left({\omega\over 2}\right), {\omega\over 2} \rangle_g - \langle  W_- \left({\varphi}\right), {\varphi} \rangle_g \right]$$
where $\tilde\omega = u\omega$ and $\tilde \varphi = u\varphi$.
K\"ahler form of the new metric and the representative $2$-form of the plane are different from the originals. They both rescale to keep their norms fixed.
Thus, by the same argument, $\tilde g$ has positive orthogonal holomorphic bisectional curvature.
\end{proof}

The following formula will be used:

\beg{thm}[Weitzenb\" ock Formula \cite{bourguignon}]
On a Riemannian manifold, the Hodge/modern Laplacian can be
expressed in terms of the connection/rough Laplacian as
\begin{equation}\label{weitz}(d+d^*)^2=\nabla^*\nabla-2W+{s\over 3} \end{equation}
where $\nabla$ is the Riemannian connection and $W$ is the Weyl
curvature tensor. \end{thm}

\begin{lem}\label{lemma.pos}
Let $(M,J)$ be a compact complex surface equipped with a conformally K\"ahler metric $g$ which has positive orthogonal holomorphic bisectional curvature. Then the intersection form on the second cohomology of $M$ is positive definite.
\end{lem}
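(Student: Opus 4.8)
The plan is to prove that $b^-=0$, i.e.\ that there are no nonzero anti-self-dual harmonic $2$-forms; by Hodge theory this is equivalent to the intersection form on $H^2(M;\mbb R)$ being positive definite, since its signature is $(b^+,b^-)$ and Poincar\'e duality makes it nondegenerate. As $b^-$ is a topological invariant and does not depend on the choice of metric within a fixed orientation, and as Lemma \ref{lemma.conf} guarantees that the K\"ahler metric conformal to $g$ again has positive orthogonal holomorphic bisectional curvature, I may assume without loss of generality that $g$ itself is K\"ahler.

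The first step is to convert the curvature hypothesis \eqref{pohbc.cond} into a pointwise bound on $W_-$. On a K\"ahler surface the K\"ahler form $\omega$ is an eigenvector of $W_+$ with eigenvalue $s/6$ (the complementary eigenvalue on $(\Lambda^2_+)_p\ominus\mbb R\omega$ being $-s/12$), so that, using $|\omega|^2=2$,
$$\langle W_+\left(\tfrac{\omega}{2}\right),\tfrac{\omega}{2}\rangle=\frac{s}{6}\,\left|\tfrac{\omega}{2}\right|^2=\frac{s}{12}.$$
Substituting this into \eqref{pohbc.cond} gives $\langle W_-(\varphi),\varphi\rangle<s/12$ for every anti-self-dual $\varphi$ with $|\varphi|=1/\sqrt 2$; rescaling an arbitrary anti-self-dual $2$-form $\psi$ to this norm then yields the pointwise inequality
$$\langle W_-(\psi),\psi\rangle<\frac{s}{6}\,|\psi|^2$$
at every point where $\psi\neq 0$.

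Next I apply the Weitzenb\"ock formula \eqref{weitz} to an anti-self-dual harmonic $2$-form $\psi$. Harmonicity gives $(d+d^*)^2\psi=0$, and anti-self-duality makes $W$ act on $\psi$ as $W_-$; pairing with $\psi$ and integrating over the compact manifold $M$ produces
$$0=\int_M\left(|\nabla\psi|^2-2\langle W_-(\psi),\psi\rangle+\frac{s}{3}|\psi|^2\right)dV.$$
The pointwise bound above gives $-2\langle W_-(\psi),\psi\rangle+\tfrac{s}{3}|\psi|^2>0$ wherever $\psi\neq 0$, while $|\nabla\psi|^2\geq 0$ everywhere; hence the integrand is nonnegative everywhere and strictly positive on the open set $\{\psi\neq 0\}$. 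Since the integral vanishes, that set must be empty, so $\psi\equiv 0$. Therefore there are no nonzero anti-self-dual harmonic $2$-forms, $b^-=0$, and the intersection form is positive definite.

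The crux of the argument, and its only delicate point, is the exact matching of constants: the hypothesis must supply precisely the bound $\langle W_-(\psi),\psi\rangle<(s/6)|\psi|^2$ in order to cancel the $s/3$ term in \eqref{weitz} and force the integrand to be positive. This matching depends on the K\"ahler identity $W_+(\omega)=(s/6)\omega$ together with the conformal invariance of Lemma \ref{lemma.conf}, which is what permits the reduction to the K\"ahler case where that identity is available. The remaining care is to ensure that the \emph{strict} inequality in the hypothesis propagates to the conclusion $\psi\equiv 0$, rather than merely $\nabla\psi\equiv 0$, which is what the argument via the sign of the integrand accomplishes.
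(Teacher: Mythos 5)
Your proof is correct and follows essentially the same route as the paper: reduce to the K\"ahler case via Lemma \ref{lemma.conf}, combine the identity $W_+\omega=\frac{s}{6}\omega$ with \eqref{pohbc.cond} to get the pointwise bound $\langle W_-(\psi),\psi\rangle<\frac{s}{6}|\psi|^2$, and conclude with the Weitzenb\"ock formula \eqref{weitz}. The only cosmetic difference is that you apply \eqref{weitz} directly to $\psi$, whereas the paper applies it to $\frac{\omega}{2}+\varphi$ and pairs with $\frac{\omega}{2}-\varphi$; since $\nabla\omega=0$ the cross terms cancel and both computations land on the same identity \eqref{int.step}.
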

In other words, the Betti number $b^2_-$, which stands for the number of negative eigenvalues of the cup product on $H^2(M,\mathbb R)$, is zero. Recall that, by Hodge theory, $b^2_-$ is the number is equal to the dimension of anti-self-dual \emph{harmonic} $2$-forms on the Riemannian manifold $(M,g)$.
\begin{proof}
Without loss of generality, we may assume that the conformally K\"ahler metric $g$ is indeed \emph{K\"ahler} with K\"ahler form $\omega$, by Lemma \ref{lemma.conf}. For K\"ahler metrics, $\omega$ is an eigenvector of $W_+$ with eigenvalue $\frac{s}{3}$, that is,
\begin{equation}\label{evalue}
\langle W_+(\omega),\omega\rangle = \frac{s}{6}|\omega|^2 = \frac{s}{3},
\end{equation}
 by \eqref{weitz} applied to $\omega$, or see \cite{Derdzinski}.

Now, let $\varphi$ any a be harmonic anti-self-dual 2-form. We claim that $\varphi$ is identically zero. If not, then the ``non-vanishing" set $V:=\{\varphi\neq 0\}$ is non-empty. If we apply the Weitzenb\"ock formula with the harmonic 2-form $\frac{\omega}{2}+\varphi$, we get
\begin{align*}
0&= \nabla^* \nabla (\frac{\omega}{2} +\varphi) - 2W(\frac{\omega}{2} +\varphi) + \frac{s}{3} (\frac{\omega}{2} +\varphi).
\end{align*}
Now, using the fact that $\nabla \omega = 0$ (K\"ahler condition) and taking the $L_2$-inner product with $\frac{\omega}{2}-\varphi$ we obtain
\begin{align*}
\hspace{-13mm}0&= -\int_M |\nabla \varphi|^2 d\mu_g - 2\int_M \left. \langle W_+ \left(\frac{\omega}{2}\right) ,\frac{\omega}{2} \rangle - \langle W_- (\varphi),\varphi\rangle \right. d\mu_g + \int_M \frac{s}{3} \left(\frac{|\omega|^2}{4} -|\varphi|^2\right) d\mu_g
\end{align*}

Now using \eqref{evalue} in the second integral above yields
\begin{align}
\label{int.step}0&= -\int_M |\nabla \varphi|^2 d\mu_g + 2 \int_M \left(   \langle W_- (\varphi),\varphi\rangle  - \frac{s}{6} |\varphi|^2 \right) d\mu_g.
\end{align}
We claim that the second integral is negative if $V$ is non-empty.

First of all, the integrand $\langle W_- (\varphi),\varphi\rangle - \frac{s}{6} |\varphi|^2 \leq 0$ on all of $M$: Outside $V$, $\varphi$ is zero; therefore, the inequality is trivially satisfied. On the other hand, on $V$, $\varphi$ is non-zero, thus at every point $p\in V$ we can consider the rescaled form $\tilde\varphi=\frac{1}{\sqrt 2} \frac{\varphi}{|\varphi|_p}$ which has the desired norm $1/\sqrt 2$. Since $g$ has positive orthogonal holomorphic bisectional curvature, by \eqref{pohbc.cond} applied to $\tilde\varphi$, we have
$$ \langle W_- (\varphi),\varphi\rangle = 2|\varphi|^2 \langle W_- (\tilde\varphi),\tilde\varphi\rangle < 2 |\varphi|^2 \langle W_+ \left(\frac{\omega}{2}\right),\frac{\omega}{2}\rangle = \frac{s}{6} |\varphi|^2.$$
Hence, on a small ball around $p$, the integral of  $ \langle W_- (\varphi),\varphi\rangle  -  \frac{s}{6} |\varphi|^2$ is strictly negative. This implies that $\int_M  \langle W_- (\varphi),\varphi\rangle  -  \frac{s}{6} |\varphi|^2 d\mu_g<0$, and thus, from \eqref{int.step}, we arrive at
\begin{align*}
\int_M |\nabla \varphi|^2 d\mu_g<0
\end{align*}
which is absurd. This contradiction shows that $V$ must be empty, and therefore $\varphi$ is identically zero, and hence $b^2_- =0$. Since we are on a compact manifold of K\"ahler type, we have $b^2_+ \geq 1$. Thus, the intersection form is positive definite.
\end{proof}

\newpage

\section{Main Result and a Corollary}
In this section we will prove our main result:

\begin{thm} \label{finalthm}
If a compact complex surface $(M,J)$ admits a \emph{conformally K\"ahler} metric $g$ of \emph{positive orthogonal holomorphic bisectional curvature}, then it is biholomorphically equivalent to the complex projective plane $\mbb{CP}_2$.
\end{thm}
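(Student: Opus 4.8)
The plan is to combine the topological conclusion of Lemma \ref{lemma.pos} with a pointwise curvature estimate and the Enriques--Kodaira classification. By Lemma \ref{lemma.conf} I may replace $g$ by a genuine K\"ahler metric in its conformal class without affecting the hypothesis, and by Lemma \ref{lemma.pos} this metric satisfies $b^2_-(M)=0$. Since $M$ is of K\"ahler type, $b^2_+(M)=2p_g+1\ge 1$, so the intersection form is positive definite of rank $b_2=b^2_+$.

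First I would extract positivity of the scalar curvature from the hypothesis. Specializing the reformulation \eqref{pohbc.cond} to the K\"ahler representative and inserting the eigenvalue identity \eqref{evalue} (which gives $\langle W_+(\omega/2),\omega/2\rangle=\tfrac{s}{12}$), the condition reads $\tfrac{s}{12}-\langle W_-(\varphi),\varphi\rangle>0$ for every $\varphi\in\Lambda_-$ with $|\varphi|=1/\sqrt2$. Because $W_-$ is a trace-free self-adjoint endomorphism of the rank-three bundle $\Lambda_-$, its largest eigenvalue is nonnegative; choosing $\varphi$ to be the corresponding eigenvector rescaled to norm $1/\sqrt2$ makes $\langle W_-(\varphi),\varphi\rangle\ge 0$, and the displayed inequality then forces $s>0$ at every point of $M$.

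With pointwise positive scalar curvature in hand, the Bochner technique applied to holomorphic sections of the powers $K_M^{\otimes m}$ shows that all plurigenera vanish, so $M$ has Kodaira dimension $-\infty$; by the classification of surfaces this means $M$ is rational or ruled. It then remains to sift the K\"ahler surfaces of Kodaira dimension $-\infty$ for those with $b^2_-=0$: every geometrically ruled surface (over a base of any genus, including the Hirzebruch surfaces over $\mbb{CP}_1$) has indefinite intersection form, hence $b^2_-=1$, and each non-trivial blow-up introduces a $(-1)$-class and so raises $b^2_-$. Thus $M$ must be the unique minimal rational surface with positive-definite intersection form, namely $\mbb{CP}_2$, which yields the desired biholomorphism.

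The main obstacle is the interface between the differential geometry and the classification, not any single computation. The point is that $b^2_-=0$ by itself is \emph{not} enough: fake projective planes also satisfy $b^2_-=0$ yet are of general type and are not $\mbb{CP}_2$. What rescues the argument is precisely the pointwise positivity $s>0$, which rules out all surfaces of nonnegative Kodaira dimension and thereby reduces matters to the rational/ruled case, where the numerical bookkeeping singles out $\mbb{CP}_2$ uniquely.
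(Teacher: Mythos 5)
Your proposal is correct and follows essentially the same route as the paper: reduce to a K\"ahler representative via Lemma \ref{lemma.conf}, get $b^2_-=0$ from Lemma \ref{lemma.pos}, extract $s>0$ pointwise from \eqref{pohbc.cond} together with \eqref{evalue}, deduce vanishing of all plurigenera (the paper derives this from the identity $\int_M s\,d\mu = 4\pi\, c_1\cdot[\omega]$ and the positive area of a pluricanonical divisor, whereas you invoke the Bochner technique on sections of $K^{\otimes m}$ --- two standard, equivalent proofs of Yau's vanishing theorem), and finish with the Enriques--Kodaira classification plus the positive-definiteness of the intersection form. Your treatment of the trace-free $W_-$ (using only that its top eigenvalue is nonnegative) is in fact slightly more careful than the paper's assertion that a strictly positive eigenvalue exists, which can fail at points where $W_-=0$, though the conclusion $s>0$ survives either way.
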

\begin{proof}
As before, without loss of generality we may assume that $g$ is \emph{K\"ahler}, by Lemma \ref{lemma.conf}. Moreover, by Lemma \ref{lemma.pos}, $b^2_-=0$. In particular, $(M,J)$ is a \emph{minimal} complex surface, namely it cannot have any rational curve of self intersection $-1$.

Moreover, all plurigenera of $(M,J)$ are zero by Yau's Plurigenera Vanishing Theorem \cite{Yau:1974}. To see this, first note that, if $\varphi\in\Gamma(K^m)$ ($m\geq 0$) is a non-zero holomorphic section of $m$-th power of the canonical bundle $K$ of $M$, then its zero locus has area
\begin{equation}\label{canbun}
-m c_1 \cdot [\omega]>0.
\end{equation}
On the other hand, when the K\"ahler metric has positive orthogonal holomorphic bisectional curvature, \emph{its scalar curvature must be positive}, because at every point $p\in M$, we have by \eqref{pohbc.cond} that
$$\frac{s}{12} = \langle W_+ \left(\frac{\omega}{2}\right), \frac{\omega}{2} \rangle > \langle W_ - (\varphi),\varphi \rangle >0$$
for some non-zero $\varphi$ in the eigenspace, for some positive eigenvalue of $W_-$. Note that such an eigenvalue exists, because $W_-$ is trace-free.

Since $s>0$ on $M$, the the total scalar curvature, i.e. $\int_M s\,d\mu$, must be positive. However, $\int_M s\,d\mu = 4\pi c_1\cdot [\omega]>0$, and this contradicts to \eqref{canbun}.

Since all plurigenera vanish, the Kodaira dimension of $(M,J)$ must be $-\infty$. Moreover, since the intersection form is positive definite, $M$ cannot contain a self intersection $(-1)$-curve, and consequently the surface must be minimal. Now, by the Enriques-Kodaira classification of compact complex surfaces \cite{BPV}, we see that the $(M,J)$ must be a minimal rational or a ruled surface. However, since the signature $\tau$ of $M$ is positive (as $b^2_- = 0$), $(M,J)$ cannot be a ruled surface, and therefore it must be the complex projective plane $\mbb{CP}_2$, as required.
\end{proof}

We would like to conclude the paper with a nice application of our main theorem. Let $(M,J,g)$ be a compact Hermitian manifold of real dimension 4. Moreover, assume that $g$ is Einstein. In this case, LeBrun \cite[Proposition 1]{LeB:1995} showed that such a metric must be conformally K\" ahler. However, if we in addition assume that $g$ has positive orthogonal holomorphic bisectional curvature, $(M,J)$ has to be biholomorphically equivalent to $\mathbb{CP}_2$; and the conformally K\"ahler metric must indeed be a K\"ahler-Einstein metric to begin with \cite[Theorem 1]{LeB:1995}. However, by Matsushima--Lichnerowicz Theorem, the identity component of the isometry group of $g$ must be the maximal compact subgroup of the identity component of the automorphism group of $\mathbb{CP}_2$, in general for constant scalar curvature K\" ahler spaces \cite{matsushima,lichnerowicz}. It follows that $g$ is indeed $SU(3)$-invariant. But, $SU(3)$ acts transitively on $\mathbb{CP}_2=SU(3)/U(2)$, and therefore, up to scale, $g$ must be the Fubini-Study metric, which is the unique $SU(3)$-invariant metric on $\mathbb{CP}_2$.  This proves our corollary.

\begin{cor}\label{mainthmcor}
If $(M,J,g)$ is a Hermitian manifold and $g$ is an Einstein metric of positive orthogonal holomorphic bisectional curvature. Then there is a biholomorphism between $M$ and $\mbb{CP}_2$ via which $g$ becomes the Fubini-Study metric up to rescaling.
\end{cor}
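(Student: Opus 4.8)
The plan is to assemble the corollary from the main theorem together with two classical structural results, following the logical chain: conformally Kähler $\Rightarrow$ biholomorphic to $\mbb{CP}_2$ $\Rightarrow$ Kähler-Einstein $\Rightarrow$ Fubini-Study. First I would observe that since $g$ is a Hermitian, Einstein metric on a compact $4$-manifold, LeBrun's structure theorem for such metrics \cite[Proposition 1]{LeB:1995} guarantees that $g$ is \emph{conformally Kähler}. At this point the hypotheses of Theorem \ref{finalthm} are exactly met: $(M,J)$ carries a conformally Kähler metric of positive orthogonal holomorphic bisectional curvature, and hence $M$ is biholomorphic to $\mbb{CP}_2$. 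This settles the biholomorphism half of the statement and reduces the problem to identifying the metric.

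The second step is to upgrade ``conformally Kähler $+$ Einstein'' to genuinely ``Kähler-Einstein.'' On $\mbb{CP}_2$ we have $b^2_-=0$ (cf.\ Lemma \ref{lemma.pos}), so there is no room for the genuinely non-Kähler compact Hermitian Einstein examples, such as the Page metric, which lives on $\mbb{CP}_2\sharp\overline{\mbb{CP}}_2$. More precisely, LeBrun's companion result \cite[Theorem 1]{LeB:1995} forces the conformal factor to be constant, so that $g$ is itself Kähler-Einstein. One also records, exactly as in the proof of Theorem \ref{finalthm}, that positive orthogonal holomorphic bisectional curvature makes the scalar curvature positive, consistent with the Fubini-Study normalization.

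For the final step I would invoke the Matsushima-Lichnerowicz theorem \cite{matsushima,lichnerowicz}: for a constant-scalar-curvature Kähler metric, the identity component of the isometry group is a maximal compact subgroup of the identity component of the biholomorphism group. Since $\mathrm{Aut}^0(\mbb{CP}_2)=PGL(3,\mbb{C})$ has $SU(3)$ (equivalently $PU(3)$) as maximal compact subgroup, the metric $g$ must be $SU(3)$-invariant. But $SU(3)$ acts transitively on $\mbb{CP}_2=SU(3)/U(2)$ with \emph{irreducible} isotropy representation, so by Schur's lemma the $SU(3)$-invariant metric is unique up to scale, and that metric is the Fubini-Study metric. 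Composing the biholomorphism with this identification yields the desired isometry.

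I expect the main obstacle to be the second step, namely establishing that the Einstein metric is genuinely Kähler rather than merely conformal to a Kähler metric. The delicate point is that compact Hermitian Einstein $4$-manifolds need not be Kähler-Einstein in general; the non-Kähler possibilities disappear here only because the underlying manifold has already been pinned down to be $\mbb{CP}_2$, where $b^2_-=0$ eliminates the anti-self-dual degrees of freedom that such examples require. The remaining steps are then a matter of quoting LeBrun's theorem and the standard representation-theoretic uniqueness of the homogeneous invariant metric.
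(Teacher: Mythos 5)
Your proposal is correct and follows essentially the same route as the paper: LeBrun's \cite[Proposition 1]{LeB:1995} to get conformally K\"ahler, Theorem \ref{finalthm} to identify $M$ with $\mbb{CP}_2$, LeBrun's \cite[Theorem 1]{LeB:1995} to upgrade to K\"ahler--Einstein, and Matsushima--Lichnerowicz plus the transitive $SU(3)$-action to pin down the Fubini--Study metric. Your added remarks (the Page metric as the excluded non-K\"ahler case, and Schur's lemma via the irreducible isotropy representation to justify uniqueness of the invariant metric up to scale) are accurate elaborations of steps the paper states more tersely.
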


\vspace{.05in}



{\small
\beg{flushleft}
\textsc{Tuncel\'i \" Un\' ivers\'ites\'i, Mekatron\'{\i}k M\" uhend\' isl\' i\v g\' i, 
Aktuluk, 62000, T\"urk\'iye.}\\
\textit{E-mail address :} \texttt{\textbf{mkalafat@tunceli.edu.tr}}
\end{flushleft}
}

{\small
\beg{flushleft}
\textsc{
Vanderbilt University, Department of Mathematics, Nashville, TN 37240. 
}\\
\textit{E-mail address :} \texttt{\textbf{caner.koca@vanderbilt.edu}}

\end{flushleft}
}


\newpage

\bibliography{confk}{}
\bibliographystyle{alphaurl}

\end{document}